\newtheorem{theorem}{Theorem}[section]
\newtheorem{lemma}[theorem]{Lemma}
\newtheorem{observation}[theorem]{Observation}
\newtheorem{corollary}[theorem]{Corollary}
\theoremstyle{definition}
\newtheorem{remark}[theorem]{Remark}
\definecolor{darkblue}{rgb}{0,0,0.6}
\newcommand{\skalarProd}[2]{\big\langle#1,#2\big\rangle}
\renewcommand{\D}{\operatorname{D}\hspace{-1pt}}
\newcommand{\Anti}{\operatorname{Anti}}
\begin{document}

\begin{tikzpicture}[remember picture, overlay]
 \node [xshift=-1cm,yshift=15cm,rotate=-90] at (current page.south east)
 {The final publication appeared in Mathematical Methods in the Applied Sciences (2021), doi: \href{http://doi.org/10.1002/mma.7498}{10.1002/mma.7498}.
 };
\end{tikzpicture}

\title{Ne\v{c}as-Lions lemma revisited: An $L^p$-version of the generalized Korn inequality for incompatible tensor fields}
\knownauthors[lewintan]{lewintan, neff}

\maketitle

\begin{abstract}
For $1<p<\infty$ we prove an $L^p$-version of the generalized Korn  inequality for incompatible tensor fields $P$ in $ W^{1,\,p}_0(\Curl; \Omega,\R^{3\times3})$. More precisely, let $\Omega\subset\R^3$ be a bounded Lipschitz domain. Then there exists a constant $c=c(p,\Omega)>0$ such that
\begin{equation*}
 \norm{ P }_{L^p(\Omega,\R^{3\times3})}\leq c\,\left(\norm{ \sym P }_{L^p(\Omega,\R^{3\times3})} + \norm{ \Curl P }_{L^p(\Omega,\R^{3\times3})}\right)
\end{equation*}
holds for all tensor fields ~ $P\in  W^{1,\,p}_0(\Curl; \Omega,\R^{3\times3})$,~  i.e., for all $P\in W^{1,\,p}(\Curl; \Omega,\R^{3\times3})$ with vanishing tangential trace $ P\times \nu=0 $ on $ \partial\Omega$
where  $\nu$ denotes the outward unit normal vector field to $\partial\Omega$.\\
For compatible $P=\D  u$ this recovers an $L^p$-version of the classical Korn's first inequality
and for skew-symmetric $P=A$ an $L^p$-version of the Poincar\'{e} inequality.
\end{abstract}

\msc{Primary: 35A23; Secondary: 35B45, 35Q74, 46E35.}

\keywords{$W^{1,\,p}(\Curl)$-Korn's inequality, Poincar\'{e}'s inequality, Lions lemma, Ne\v{c}as estimate, gradient plasticity, dislocation density, relaxed micromorphic model}

\section{Introduction}

In this paper we generalize the main result from \cite{agn_neff2015poincare} for $n=3$ to the $L^p$-setting. This is, we prove
\begin{equation}\label{eq:Korn_Lp}
 \norm{ P }_{L^p(\Omega,\R^{3\times3})}\leq c(p,\Omega)\,\left(\norm{ \sym P }_{L^p(\Omega,\R^{3\times3})} + \norm{ \Curl P }_{L^p(\Omega,\R^{3\times3})}\right) \qquad \forall P\in  W^{1,\,p}_0(\Curl; \Omega,\R^{3\times3}),
\end{equation}
where, in the classical sense, the vanishing tangential trace reads \ $ P\times \nu=0 $ \ on $ \partial\Omega$
and  $\nu$ denotes the outward unit normal vector field on the boundary of a bounded Lipschitz domain $\Omega$.

The original proof of \eqref{eq:Korn_Lp} in $L^2$, cf.~\cite{agn_neff2015poincare}, is rather technical and uses the classical Korn's inequality, the Maxwell-compactness property and suitable Helmholtz-decompositions of tensor fields together with a restricting assumption on the domain to be ``slicable'' and is, moreover, not directly amenable to the $L^p$-case. Our new argument essentially uses only the Lions lemma resp. Ne\v{c}as estimate (Theorem \ref{th:necas_estimate}), the compactness of $W^{1,\,p}_0(\Omega)\subset\!\subset L^p(\Omega)$ and the algebraic identity
\begin{equation}\label{eq:prod_id}
(\Anti a)\times b = b \otimes a -\skalarProd{b}{a}\, \id \qquad \forall\,  a,b\in\R^3,
\end{equation}
where $\Anti:\R^3\to\so(3)$ is the canonical identification of vectors with skew-symmetric matrices, see Section \ref{sec:Notation}. For a Lipschitz domain (i.e. open connected with Lipschitz boundary) $\Omega\subset\R^n$, the Lions lemma states that $f\in L^p(\Omega)$ if and only if $f\in W^{-1,\,p}(\Omega)$ and $\nabla f \in W^{-1,\,p}(\Omega,\R^n)$, which is equivalently expressed by the Ne\v{c}as estimate
\begin{equation}
 \norm{f}_{L^p(\Omega)}\le c\, (\norm{f}_{ W^{-1,\,p}(\Omega)}+\norm{\nabla f}_{ W^{-1,\,p}(\Omega,\R^n)})
\end{equation}
with a positive constant $c=c(p,n,\Omega)$.

Let $\Omega\subset\R^n$ be a bounded Lipschitz domain. In this text, we refer to Korn's first inequality (in $L^p$) with vanishing boundary values\footnote{In fact, the estimate is true for functions with vanishing boundary values on a relatively open (non-empty) subset of the boundary.}  as the statement
\begin{equation}\label{eq:Korn1}
 \exists\, c > 0 \ \forall\, u\in W^{1,\,p}_{0}(\Omega,\R^n) :  \qquad \norm{\D  u}_{L^p(\Omega,\R^{n\times n})}\leq c\, \norm{\sym \D  u}_{L^{p}(\Omega,\R^{n\times n})}.
\end{equation}
It can be obtained from Korn's second inequality (in $L^p$), which does not require boundary conditions and which reads
\begin{equation}\label{eq:Korn2}
  \exists\, c > 0 \ \forall\, u\in W^{1,\,p}(\Omega,\R^n) :  \qquad \norm{u}_{W^{1,\,p}(\Omega,\R^n)}\leq c\, \left(\norm{u}_{L^p(\Omega,\R^n)} + \norm{\sym \D  u}_{L^{p}(\Omega,\R^{n\times n})} \right).
\end{equation}
A further consequence of the latter inequality is the quantitative version
\begin{equation}\label{eq:KornQuant}
 \exists\, c > 0 \ \forall\, u\in W^{1,\,p}(\Omega,\R^n)\ \exists\, \widetilde{A}\in\so(n) :  \qquad \norm{\D  u- \widetilde{A}}_{L^p(\Omega,\R^{n\times n})}\leq c\, \norm{\sym \D  u}_{L^{p}(\Omega,\R^{n\times n})}.
\end{equation}
These inequalities are crucial for a priori estimates in linear elasticity and fluid mechanics and hence cornerstones for well-posedness results in linear elasticity ($L^2$-setting) and the Stokes-problem ($L^p$-setting). They
can be generalized to many different settings, including the geometrically nonlinear counterpart
\cite{friesecke2002rigidity,LM2016optimalconstants}, mixed growth conditions \cite{CDM2014mixedgrowth}, incompatible  fields (also with dislocations) \cite{MSZ2014incompatible,agn_neff2015poincare,agn_neff2012maxwell,agn_bauer2013dev} and trace-free infinitesimal strain measures \cite{Dain2006tracefree,agn_jeong2008existence,Reshetnyak1970,Schirra2012tracefreenD}. Other generalizations are applicable to Orlicz-spaces \cite{Fuchs2010,BD2012Orlicz,BCD2017Orlicz,Cianchi2014Orlicz} and SBD functions with small jump sets \cite{Friedrich2017SBD,CCF2016SBD,Friedrich2018SBD,KohnTemam1983SBD}, thin domains \cite{LM2011thindomians,GH2018thindomains,Harutyunyan2017thindomains} and John domains \cite{DM2004Jonesdomains,DRS2010Johndomains,ADM2006Johndomains} as well as the case of non-constant coefficients \cite{agn_neff2002korn,agn_lankeit2013uniqueness,agn_neff2014counterexamples,Pompe2003Korn}. Piecewise Korn-type inequalities subordinate to a FEM-mesh and involving jumps across element boundaries have also been investigated, see e.g.\ \cite{Brenner2004Korn,agn_lew2004optimal}. Korn's inequalities fail for $p=1$ and $p=\infty$, see \cite{CFM2005counterex,dlM1964counterex,Ornstein1962}.

There exist many different proofs of the classical Korn's inequalities, see the discussions in \cite{Ciarlet2010,agn_neff2015poincare,ACM2015,KO88,Nitsche81,DuvautLions72,Geymonat86} as well as \cite[Sect. 6.15]{Ciarlet2013FAbook} and the references contained therein. A rather concise and elegant argument uses the well-known representation of the second distributional derivatives of  the displacement $u$ by a linear combination of the first derivatives of the symmetrized gradient:
\begin{subequations}\label{eq:sec_der_id}
\begin{equation}
 \partial_i\partial_j u_k = \partial_j(\sym \D  u)_{ik}+ \partial_i(\sym \D  u)_{jk}-\partial_k(\sym \D  u)_{ij},
\end{equation}
i.e.
\begin{equation}
\D\,  (\D  u) = L(\D  \sym \D  u), \quad \text{with a linear operator } L.
\end{equation}
\end{subequations}
Then $\sym \D  u=0$ implies that $u$ is a first order polynomial. Furthermore, for $1<p<\infty$, the Lions lemma resp.\ Ne\v{c}as estimate (Theorem \ref{th:necas_estimate}) applied to \eqref{eq:sec_der_id} yields a variant of Korn's second inequality in $W^{1,\,p}(\Omega)$ from which, in turn, the first Korn's inequality (with boundary conditions) can be deduced \cite{Ciarlet2010, Geymonat86, DuvautLions72,ACM2015} using an indirect argument together with the compactness of the dual spaces
\begin{equation}\label{eq:dualcompact}
W^{1,\,p}_0(\Omega)\subset\!\subset L^p(\Omega)=  \left(L^{p'}(\Omega)\right)'\subset\!\subset \left(W^{1,\,p'}_0(\Omega)\right)'=W^{-1,\,p}(\Omega)
; \qquad
\frac1p+\frac{1}{p'}=1.
\end{equation}
Furthermore, such an argumentation scheme also applies to obtain Korn-type inequalities on surfaces \cite{Duduchava2010,CHM2016,Ciarlet98Kornsurfaces}, in Sobolev spaces with negative exponents \cite{ACC2010KorninDual} and in weighted homogeneous Sobolev spaces \cite{DRS2010Johndomains}.

Korn's inequalities can be generalized to incompatible square tensor fields $P$ if one adds a term  in $\Curl P$
on the right hand side, cf. \cite{agn_neff2015poincare}, thus extending Korn's first inequality to
incompatible tensor fields having vanishing restricted tangential trace on (a relatively open subset of) the boundary. For recent refined estimates which involve only the deviatoric (i.e. trace free) part of
 $\sym P$ and $\Curl P$, see \cite{agn_bauer2013dev}.  In the two-dimensional case, an even
stronger estimate holds true for fields $P\in L^1(\Omega,\R^{2\times2})$ with $\Curl P\in L^1(\Omega,\R^2)$; then $P\in L^2(\Omega,\R^{2\times2})$ and
\begin{equation}
\norm{P}_{L^2(\Omega,\R^{2\times2})} \leq c\,\left(\norm{\sym P}_{L^2(\Omega,\R^{2\times2})} + \norm{\Curl P}_{L^1(\Omega,\R^2)}\right)
\end{equation}
under the normalization condition $\int_\Omega \skew P\, \intd{x}= 0$, cf. \cite{Garroni10}. However, for applications, it is preferable to work in the three-dimensional case and under more natural tangential boundary conditions.

Our new inequality \eqref{eq:Korn_Lp} is originally motivated from infinitesimal gradient plasticity with plastic spin. There, one introduces the additive decomposition
$$
\D  u = e + P
$$
of the displacement gradient $\D  u\in\R^{3\times3}$ into incompatible non-symmetric elastic distortion $e$ and incompatible plastic  distortion $P$. Then the thermodynamic potential generically has the form
\begin{equation}\label{eq:plast_potential}
\begin{split}
\int_\Omega\frac12\norm{\sym e}^2 +& \frac12\norm{\sym P}^2+ \frac12\norm{\Curl e}^2 -\skalarProd{f}{u}\,\intd{x}\\
&= \int_\Omega \frac12\norm{\sym (\D  u - P)}^2+\frac12\norm{\sym P}^2 + \frac12\norm{\Curl P}^2 -\skalarProd{f}{u}\,\intd{x},
 \end{split}
\end{equation}
where $f\in L^2(\Omega,\R^3)$ describes the body force, see e.g. \cite{agn_ebobisse2016canonical, agn_ebobisse2018well,RS2017viscoplast,agn_neff2009notes}. Here, $\norm{\sym e}^2$ represents the elastic energy,  $\norm{\sym P}^2$ induces linear hardening and $\Curl P$ is known as the \emph{dislocation density tensor}.  The $L^2$-generalized Korn's inequality establishes coercivity of \eqref{eq:plast_potential} with respect to displacements and plastic distortions, for example, if Dirichlet boundary condition $u_{|_{\Gamma_D}}=0$ and consistent tangential boundary conditions $P\times \nu\,_{|_{\Gamma_D}}=0$ are prescribed. Crucial for plasticity theories with spin is that the plastic contribution cannot be reduced to a dependence on the symmetric plastic strain $\varepsilon_p\coloneqq\sym P$ alone, as can be done in classical plasticity. The system of  equations connected to \eqref{eq:plast_potential} reads
\begin{equation}\label{eq:thermo}
 \begin{split}
  \Div (\sym(\D  u -P)) = f, \qquad \text{(balance of forces)}\\
  \dot{P}\in\partial I_K(\sym(\D  u -P)-\sym P - \Curl \Curl P), \qquad \text{(plastic flow law)}
 \end{split}
\end{equation}
together with appropriate initial and boundary conditions, where $I_K$ is the indicator function of a convex domain $K$. The $L^p$-version presented in this article may then serve to show well-posedness results for nonlinear dislocation mediated hardening. Notably, analytic examples suggest to use $\norm{\Curl P}^q$ with $1<q<2$ for the nonlocal dislocation backstress, cf. \cite{CO2005dislocation,Garroni10}.

Another field of application of the generalized Korn's inequality for incompatible tensor fields is the so-called \emph{relaxed micromorphic model}, see \cite{agn_neff2015relaxed,agn_madeo2016reflection,agn_madeo2018modeling}. In this generalized continuum model, the task is to find the macroscopic displacement $u:\Omega\subset \R^3 \to \R^3$ and the (still macroscopic) micro-distortion tensor $P:\Omega\subset\R^3\to \R^{3\times3}$ minimizing the elastic energy
\begin{equation}\label{eq:energy}
\begin{split}
 &\int_{\Omega} \frac12\norm{\sym (\D  u - P)}^2+\frac12\norm{\sym P}^2 + \frac12\norm{\Curl P}^2  -\skalarProd{f}{u}\,\intd{x}\to \min\,,\\
 &\hspace{5em}(u,P)\in H^1(\Omega,\R^3)\times H(\Curl;\Omega,\R^{3\times3}) \quad\text{ with } \ u_{|_{\Gamma_D}}= g, \ P\times \nu\,_{|_{\Gamma_D}} = \D  u\times \nu\,_{|_{\Gamma_D}}.
\end{split}
\end{equation}
The equilibrium equations are the Euler-Lagrange equations to \eqref{eq:energy}, which read
\begin{equation}\label{eq:EL}
\begin{split}
 \Div \sym (\D  u -P) &= f  \hspace{7.4em}\text{(balance of forces)},\\
 \Curl \Curl P + \sym P &= \sym(\D  u -P) \qquad \text{(generalized balance of angular momentum)}.
\end{split}
 \end{equation}
Here, \eqref{eq:EL}$_2$ represents a \emph{tensorial Maxwell problem} in which, due to the appearance of $\sym P$ (instead of $P$), the equations are strongly coupled. Note that the appearance of $\sym P$ in \eqref{eq:plast_potential} and \eqref{eq:energy} is dictated by invariance of the model under infinitesimal rigid body motions. The well-posedness of the weak formulation depends on Korn-type inequalities for incompatible tensor fields, see \cite{agn_bauer2013dev,agn_neff2015poincare}. Dynamic versions of this model allow for the description of frequency band-gaps as observed in metamaterials, cf.  \cite{agn_madeo2016reflection}. The band-gap property crucially depends on using $\Curl P$ in the model.

Let us mention as third application the $p$-$\Curl\Curl$ problem \cite{LL2019pcurlcurl,laforest2018pcurlcurl,MRS2012pcurlcurl} appearing in modeling the magnetic field in a high-temperature superconductor. By the definition of the Banach space $W^{1,\,p}(\Curl;\Omega,\R^{3\times3})$ it is clear that
\begin{equation}\label{eq:curlcurl}
 \Curl(\norm{\Curl P}^{p-2}\Curl P) + P = G, \quad P\times \nu _{|\partial \Omega} =0
\end{equation}
with $G\in L^{p'}(\Omega,\R^{3\times3})$ admits a unique solution for $1<p\leq2$, since \eqref{eq:curlcurl} is the Euler-Lagrange equation to the strictly convex minimization problem
\begin{equation}
 \int_{\Omega} \frac12\norm{P}^2+\frac1p \norm{\Curl P}^p-\skalarProd{G}{P}\,\intd{x} \to \min.
\end{equation}
Our new a-priori estimate \eqref{eq:Korn_Lp} then allows us to show existence and uniqueness for $1<p\leq 2$ of weak solutions $P\in W^{1,\,p}(\Curl;\Omega,\R^{3\times3})$ to
\begin{equation}
 \Curl(\norm{\Curl P}^{p-2}\Curl P) + \sym P = G, \quad P\times \nu _{|\partial \Omega} =0
\end{equation}
resp.\
\begin{equation}
 \int_{\Omega} \frac12\norm{\sym P}^2+\frac1p \norm{\Curl P}^p-\skalarProd{G}{P}\,\intd{x} \to \min.
\end{equation}

\section{Notation and preliminaries}\label{sec:Notation}
We denote by   $\skalarProd{.}{.}$ the scalar product and by $.\otimes .$ the dyadic product. In $\R^3 $ we moreover make use of the cross product $.\times.$~. Since for a fixed vector $a\in\R^3$ the vector product $a\times .$  is linear in the second component there exists a unique matrix $\Anti(a)$ with the property
\begin{equation}
 a\times b \eqqcolon \Anti(a)\,b \qquad \forall \ b\in\R^3.
\end{equation}
For $a=(a_1,a_2,a_3)^T$ the matrix $\Anti(a)$ has the form
\begin{equation}
 \Anti(a)=\begin{pmatrix}
           0 & -a_3 & a_2 \\ a_3 & 0 & -a_1 \\ -a_2 & a_1 & 0
          \end{pmatrix}\,,
\end{equation}
so that $\Anti:\R^3\to\so(3)$ identifies $\R^3$ with the space of skew-symmetric matrices $\so(3)$ canonically, and allows for a generalization towards a vector product of a matrix $P\in\R^{3\times3}$ and a vector $b\in\R^3$ via
\begin{equation}
 P\times b \coloneqq P \Anti(b)
\end{equation}
which is seen as taking the row-wise cross product with $b$. Of crucial importance in our considerations is the relation
 \begin{align}\label{eq:prod_id_ausgeschrieben}
  (\Anti (a))\times b =  \begin{pmatrix}
                        -a_2b_2-a_3b_3 & a_2b_1 & a_3b_1 \\
                        a_1b_2 & -a_1b_1-a_3b_3 & a_3b_2 \\
                        a_1b_3 & a_2b_3 & -a_1b_1-a_2b_2
                       \end{pmatrix}
                     = b\otimes a - \skalarProd{b}{a}\,\id\,.
 \end{align}
An easy consequence is
\begin{observation}\label{obs:alg_statement}
 For $a,b\in\R^3$ we have
 \begin{equation}\label{eq:norm_abschaetzung}
 \norm{a}\norm{b}\le\norm{(\Anti (a))\times b}\le\sqrt{2}\,\norm{a}\norm{b}\,.
 \end{equation}
\end{observation}
\begin{proof}
Taking the squared norm on both sides of \eqref{eq:prod_id} we obtain
\begin{align}
 \norm{(\Anti(a))\times b}^2= \norm{b\otimes a}^2+3\skalarProd{b}{a}^2-2\skalarProd{b}{a}^2 = \norm{a}^2\norm{b}^2+\skalarProd{a}{b}^2
\end{align}
and the right hand side is bounded from below by $\norm{a}^2\norm{b}^2$ and from above by $2\,\norm{a}^2\norm{b}^2$. These bounds are sharp if $a$ is perpendicular to $b$ and if $a$ is parallel to $b$, respectively.
\end{proof}
\begin{remark}
 By the identification of skew-symmetric matrices with vectors in $\R^3$ the estimate \eqref{eq:norm_abschaetzung} also reads
 \begin{equation}
  \frac12\norm{A}\norm{\widetilde{A}}\le\norm{A\,\widetilde{A}}\le\frac{\sqrt{2}}{2}\norm{A}\norm{\widetilde{A}} \qquad \forall A,\widetilde{A}\in\so(3).
 \end{equation}
Indeed, with the choice of $a,\widetilde{a}\in\R^3$ such that $A=\Anti(a)$ and $\widetilde{A}=\Anti(\widetilde{a})$, we have $A\,\widetilde{A}=\Anti(a)\times\widetilde{a}$, so that in regard with $\norm{A}^2=2\norm{a}^2$ and $\norm{\widetilde{A}}^2=2\norm{\widetilde{a}}^2$ the estimate follows.
\end{remark}

\subsection{Basic considerations from Clifford analysis}

The vector differential operator $\nabla$ behaves algebraically like a vector, so that the gradient, the divergence and curl of a vector field $a\in\mathscr{D}'(\Omega,\R^3)$ can be formally viewed as
\begin{equation}\label{eq:vector_diff}
 \D  a = a \otimes \nabla = (\nabla \otimes a)^T, \  \div a = \skalarProd{a}{\nabla}=\skalarProd{\nabla}{a} = \skalarProd{\nabla a}{\id} = \tr{\D  a} \ \text{ and} \ \operatorname{curl} a = a \times (-\nabla) = \nabla \times a\,.
\end{equation}
These operations generalize to $(3\times 3)$-matrix fields row-wise. (In fact, we understand by $\D  P$ the full gradient of $P\in \mathscr{D}'(\Omega,\R^{3\times 3})$.)  So, especially, the matrix $\Curl: \mathscr{D}'(\Omega,\R^{3\times 3})\to\mathscr{D}'(\Omega,\R^{3\times 3})$ is introduced as
\begin{equation}
 \Curl P \coloneqq P \times (-\nabla)= P\,\Anti(-\nabla).
\end{equation}
Comparable with the classical representation of the second derivatives of the displacement by linear combinations of first derivatives of its symmetrized gradient, cf. \eqref{eq:sec_der_id}, the fundamental relation \eqref{eq:prod_id} implies that the full gradient of a skew-symmetric matrix is already determined by its $\Curl$:
\begin{corollary} \label{cor:lin_combi}
 For $A\in \mathscr{D}'(\Omega,\so(3))$ the entries of the derivative $\D  A$ are linear combinations of the entries from $\Curl A$.
\end{corollary}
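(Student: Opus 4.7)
The plan is to make the identity \eqref{eq:prod_id} do all the work. Write $A = \anti a$ with $a \coloneqq \axl A \in C^1(\Omega,\R^3)$. Treating $\nabla$ formally as a vector as in \eqref{eq:vector_diff}, I would apply the product identity $(\anti a)\times b = b\otimes a - \skalarProd{b}{a}\,\id$ with $b = -\nabla$ (acting on $a$) to compute
\begin{equation*}
\Curl A \;=\; (\anti a)\times(-\nabla) \;=\; -\nabla\otimes a \;+\; \skalarProd{\nabla}{a}\,\id \;=\; -(\bnabla a)^{T} \;+\; (\div a)\,\id,
\end{equation*}
where in the last step I use $\nabla\otimes a = (\bnabla a)^{T}$ and $\skalarProd{\nabla}{a}=\div a$ from \eqref{eq:vector_diff}. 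To be safe I would also verify this formula by a one-shot component calculation directly from the definitions of $\anti$ and $\Curl$.

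Next I would invert this relation for $\bnabla a$ in terms of $\Curl A$. Taking the trace gives $\tr\Curl A = -\div a + 3\,\div a = 2\,\div a$, hence $\div a = \tfrac12\tr\Curl A$. Substituting back yields
\begin{equation*}
\bnabla\axl A \;=\; -(\Curl A)^{T} \;+\; \tfrac12\,\bigl(\tr\Curl A\bigr)\,\id,
\end{equation*}
so every partial derivative $\partial_{k}a_{\ell}$ is an explicit linear combination of entries of $\Curl A$.

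Finally, since the entries of $A = \anti a$ are, up to signs, just the components of $a$ (namely the off-diagonal entries $\pm a_{1},\pm a_{2},\pm a_{3}$ with zero diagonal), each entry of the full gradient $\bnabla A$ is $\pm\partial_{k}a_{\ell}$ for some $k,\ell$, and therefore a linear combination of entries of $\Curl A$. This completes the proof.

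There is no serious obstacle here; the one point requiring care is the formal manipulation of $\nabla$ as a vector in \eqref{eq:prod_id}, which is why I would corroborate the key formula $\Curl A = -(\bnabla a)^{T} + (\div a)\,\id$ with a direct component check before using it.
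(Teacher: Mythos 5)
Your argument is correct and is essentially the paper's own proof: both apply the identity \eqref{eq:prod_id} with $b=-\nabla$ to obtain Nye's formula $\Curl A = \tr(\bnabla\axl A)\,\id - (\bnabla\axl A)^{T}$ and then invert it via the trace to get $\bnabla\axl A = \tfrac12(\tr\Curl A)\,\id - (\Curl A)^{T}$. Your closing remark that the entries of $\bnabla A$ are, up to sign, those of $\bnabla\axl A$ just makes explicit a step the paper leaves implicit.
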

\begin{proof} Let us consider the distributional version of $\Anti:\mathscr{D}'(\Omega,\R^{3})\to\mathscr{D}'(\Omega,\so(3))$, i.e., for each $a=(a_1,a_2,a_3)^T\in\mathscr{D}'(\Omega,\R^{3})$ we have \[\Anti(a)\coloneqq \begin{pmatrix}0 & -a_3 & a_2 \\ a_3 & 0 & -a_1 \\ -a_2 & a_1 &0 \end{pmatrix}\in\mathscr{D}'(\Omega,\so(3)). \]
Thus, for 
$A=\Anti(a)$, we find
\begin{equation}\label{eq:Curl_skew}
 \Curl A = (\Anti a) \times (-\nabla) \overset{\eqref{eq:prod_id}}{=}
 \tr(\D  a)\,\id- (\D  a)^T,
\end{equation}
thus
\begin{equation}
\D  a = \frac12 (\tr[\Curl A])\id - (\Curl A)^T.
\end{equation}
Therefore, the entries of  $\D  A=\D\Anti(a)$ are linear combinations of the entries from $\Curl A$:
\begin{equation*}
 \D  A = L(\Curl A) \qquad \text{for any skew-symmetric matrix field $A$.}
 \qedhere
\end{equation*}
\end{proof}
\begin{remark}
 Equation \eqref{eq:Curl_skew} is also known as Nye's formula, cf. \cite[eq.\!\! (7)]{Nye53}, but is, in fact, a special case of the algebraic relation \eqref{eq:prod_id}. Interestingly, relation \eqref{eq:Curl_skew} admits also a counterpart on $\SO(3)$ and even in higher spatial dimensions, see \cite{agn_munch2008curl}.
\end{remark}

\begin{corollary}\label{cor:Nye}
 Let $\Omega$ be connected and $A\in L^p(\Omega,\so(3))$. Then we have $\Curl A = 0$ in the distributional sense if and only if $A= \operatorname{const.}$ almost everywhere in $\Omega$.
 \end{corollary}
\begin{proof}
Follows from the expression $\D A = L(\Curl A)$ in the distributional sense.
\end{proof}

\subsection{Function spaces and equivalence of norms}

By definition of the norm in the dual space, it is clear that for $u\in L^p(\Omega,\R^d)$ we have $u\in W^{-1,\,p}(\Omega,\R^d)$ and $\D u\in W^{-1,\,p}(\Omega,\R^{d\times n})$ in the distributional sense, since for all $\varphi\in  W^{1,\,p'}_0(\Omega,\R^d)$, with $\frac1p+\frac{1}{p'}=1$, it holds
\begin{subequations}
\begin{align}
\left| \int_\Omega \skalarProd{u}{\varphi}\,\intd{x} \right|  &\leq \norm{u}_{L^p(\Omega,\R^d)}\,\norm{\varphi}_{W^{1,\,p'}(\Omega,\R^d)}\ , \quad \text{and}\\
\left| \int_\Omega \skalarProd{\partial_i u}{\varphi}\,\intd{x} \right| &= \left| \int_\Omega \skalarProd{u}{\partial_i \varphi}\,\intd{x} \right| \leq \norm{u}_{L^p(\Omega,\R^d)}\,\norm{\varphi}_{W^{1,\,p'}(\Omega,\R^d)}\,.
\end{align}
\end{subequations}
It is remarkable and a deep result that a converse implication holds true as well.
\begin{theorem}[Lions lemma and Ne\v{c}as estimate]  \label{th:necas_estimate}
Let $\Omega\subset\R^n$ be a bounded  Lipschitz  domain.  Let $m \in \Z$ and $p \in (1, \infty)$.
Then $f \in  \mathscr{D}'(\Omega,\R^d)$ and $\D  f \in W^{m-1,\,p}(\Omega,\R^{d\times n})$ imply
$f \in W^{m,\,p}(\Omega,\R^d)$.
Moreover, 
\begin{equation}  \label{eq:necas_m_p}
 \norm{ f}_{W^{m,\,p}(\Omega,\R^d)} \le c\,\left(\norm{ f}_{W^{m-1,\,p}(\Omega,\R^d)} + \norm{ \D  f }_{W^{m-1,\,p}(\Omega,\R^{d\times n})}\right),
\end{equation}
with a constant $c=c(m,p,n,d,\Omega)>0$.
\end{theorem}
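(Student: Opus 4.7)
The plan is to reduce to the crucial case $m=0$, i.e.\ the implication
\[
f\in W^{-1,\,p}(\Omega,\R^d),\ \bnabla f\in W^{-1,\,p}(\Omega,\R^{d\times n})\ \Longrightarrow\ f\in L^p(\Omega,\R^d)
\]
together with the corresponding norm estimate, since the cases $m\ge 1$ follow at once from the definitions, and the cases $m\le -1$ are then recovered by a mollification/duality argument. The heart of the proof is a duality pairing against the \emph{Bogovskii operator}, a bounded right-inverse $\mathcal{B}:L^{p'}_0(\Omega)\to W^{1,\,p'}_0(\Omega,\R^n)$ of the divergence on mean-free $L^{p'}$-functions. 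Its existence on a bounded Lipschitz $\Omega$ will be obtained from Bogovskii's explicit singular-integral formula on star-shaped Lipschitz subdomains, combined with a partition-of-unity gluing over a star-shaped covering of $\Omega$.

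For the $m=0$ case, fix a cutoff $\psi\in C_c^\infty(\Omega)$ with $\int_\Omega \psi\,\intd x=1$. Given any test field $\varphi\in L^{p'}(\Omega,\R^d)$, set $c_\varphi\coloneqq\int_\Omega \varphi\,\intd x\in\R^d$ and apply $\mathcal{B}$ component-wise to the mean-free field $\varphi-c_\varphi\psi$, producing a matrix field $v\in W^{1,\,p'}_0(\Omega,\R^{d\times n})$ with $\div v=\varphi-c_\varphi\psi$ (understood row-wise) and $\norm{v}_{W^{1,\,p'}}\le c\,\norm{\varphi}_{L^{p'}}$. The distributional pairing then reads
\[
 \skalarProd{f}{\varphi} \;=\; \skalarProd{f}{c_\varphi\psi} \;+\; \skalarProd{f}{\div v} \;=\; \skalarProd{f}{c_\varphi\psi} \;-\; \skalarProd{\bnabla f}{v},
\]
and both terms on the right are bounded by $\bigl(\norm{f}_{W^{-1,\,p}}+\norm{\bnabla f}_{W^{-1,\,p}}\bigr)\norm{\varphi}_{L^{p'}}$: the first because $c_\varphi\psi$ is a fixed multiple of $\psi\in W^{1,\,p'}_0(\Omega)$, the second by the Bogovskii estimate on $v$. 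Taking the supremum over $\varphi$ with unit $L^{p'}$-norm identifies $f$ with an element of $L^p(\Omega,\R^d)$ and yields the claimed bound.

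The transition to arbitrary $m\in\Z$ is comparatively soft. For $m\ge 1$ the hypothesis gives $\partial^\alpha f\in L^p$ for every $|\alpha|\le m$ --- directly from $f\in W^{m-1,\,p}$ when $|\alpha|\le m-1$, and from $\bnabla f\in W^{m-1,\,p}$ when $|\alpha|=m$ --- so that $f\in W^{m,\,p}$ with the corresponding estimate. For $m\le -1$ one mollifies: after a suitable extension of $f$ across $\partial\Omega$, the convolutions $f_\varepsilon=f*\rho_\varepsilon$ satisfy the estimate for one lower value of $m$ on a slightly shrunken subdomain; the right-hand sides remain uniformly bounded in the relevant negative-index norms by duality, and letting $\varepsilon\to 0$ transfers the bound to $f$ itself.

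The hard part will be the very first input, namely the Bogovskii bound $\norm{\mathcal{B}(g)}_{W^{1,\,q}_0}\le c\,\norm{g}_{L^q}$ on a star-shaped Lipschitz domain. This requires recognising $\mathcal{B}$ as a Calder\'on-Zygmund-type singular integral and verifying the Mikhlin/H\"ormander multiplier condition on its kernel, at which point the standard $L^q$-theory for $1<q<\infty$ applies. The subsequent extension to a general bounded Lipschitz $\Omega$ via a star-shaped covering demands a careful partition-of-unity bookkeeping to redistribute the means across the pieces so as to preserve the mean-free condition on each sub-piece, but the essential analytic difficulty is contained in the star-shaped case; the duality step itself is then essentially formal.
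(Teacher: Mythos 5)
First, a point of reference: the paper does not prove Theorem \ref{th:necas_estimate} at all. It is imported as a known deep result with citations to Ne\v{c}as, Amrouche--Girault, Borchers--Sohr, Cattabriga, etc., so there is no internal proof to compare yours against. Your strategy --- duality against a bounded right inverse $\mathcal{B}$ of the divergence --- is indeed one of the standard routes to the core case $m=0$ (it is essentially the classical equivalence between the Ne\v{c}as estimate and the solvability of $\operatorname{div} v=g$ in $W^{1,\,p'}_0$), so the plan is legitimate. But as written it has gaps that are not merely cosmetic.

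The main one is in the duality step. For $f\in W^{-1,\,p}(\Omega,\R^d)$ the pairing $\skalarProd{f}{\operatorname{div} v}$ with $v\in W^{1,\,p'}_0(\Omega,\R^{d\times n})$ is \emph{not defined}: $\operatorname{div} v$ lies only in $L^{p'}$, not in $W^{1,\,p'}_0$, and $\skalarProd{f}{\varphi}$ for general $\varphi\in L^{p'}$ is precisely the object whose existence you are trying to establish. The identity $\skalarProd{f}{\operatorname{div} v}=-\skalarProd{\bnabla f}{v}$ is the \emph{definition} of the distributional gradient only for $v\in\mathscr{D}(\Omega,\R^{d\times n})$, and you cannot pass from smooth $v_j\to v$ in $W^{1,\,p'}$ to the limit in $\skalarProd{f}{\operatorname{div} v_j}$, since $\operatorname{div} v_j$ converges only in $L^{p'}$. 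The standard repair is to test only against $\varphi\in\mathscr{D}(\Omega,\R^d)$, so that $\varphi-c_\varphi\psi$ is smooth, compactly supported and mean-free, and to use the structural fact that the Bogovskii operator maps such data into $C^\infty_c(\Omega,\R^{n})$ (true on star-shaped domains and preserved by the partition-of-unity gluing). Then the integration by parts is tautological, one obtains $\abs{\skalarProd{f}{\varphi}}\le C\,\norm{\varphi}_{L^{p'}(\Omega,\R^d)}$ on the dense subspace $\mathscr{D}(\Omega,\R^d)$, and $f$ is represented by an $L^p$ function. This version in fact yields more than your reduction claims: boundedness on \emph{mean-free} test functions already shows that $f\in\mathscr{D}'$ with $\bnabla f\in W^{-1,\,p}$ differs from an $L^p$ function by a constant, which is exactly the $m=0$ statement of the theorem (which assumes only $f\in\mathscr{D}'$, not $f\in W^{-1,\,p}$).

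The reductions to $m\neq 0$ are also not as soft as you claim. For $m\ge1$ you assert $\partial^\alpha f\in L^p$ for $\abs{\alpha}\le m-1$ ``directly from $f\in W^{m-1,\,p}$'', but the hypothesis is only $f\in\mathscr{D}'$; that $f$ itself lies in $L^p$ when $\bnabla f\in L^p$ is the Deny--Lions lemma and must be routed through the $m=0$ case (or proved separately via Poincar\'e on a star-shaped covering together with connectedness of $\Omega$). For $m\le-1$ the mollification sketch is not a proof: extending a distribution of negative order across a Lipschitz boundary and passing to the limit in the $W^{m-1,\,p}$-norms on shrunken subdomains is exactly where the difficulty sits; the cited references handle $m\le-1$ by duality with the positive-order case or by the mapping properties of $\mathcal{B}$ between negative-order spaces, which is genuinely delicate. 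A smaller quibble: $\mathcal{B}$ is not translation invariant, so the $L^q$-bound on $\bnabla\mathcal{B}$ comes from Calder\'on--Zygmund theory for its non-convolution kernel, not from a Mikhlin multiplier condition. Since the paper only ever uses $m=0$, the essential content of your proposal is salvageable, but the pairing identity must be repaired as above and the $m\neq0$ cases need actual arguments.
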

For the proof see \cite[Proposition 2.10 and Theorem 2.3]{AG90}, \cite{BS90}.  In our following discussions, the heart of the matter is the estimate  \eqref{eq:necas_m_p}, see  Ne\v{c}as \cite[Th\'{e}or\`{e}me 1]{Ne66}. In fact, the case $m=0$ is already contained in \cite{Cattabriga61}; for an alternative proof, see \cite[Lemma 11.4.1]{MW2012bvpStokes} and \cite[Chapter IV]{BF2013} as well as \cite{Bramble2003}.   For further historical remarks, see the discussions in \cite{Ciarlet2010,ACM2015} and the references contained therein.

\begin{remark}
 Note that in the case $d=n$ it suffices to consider the symmetrized gradient operator \ $\sym \D $ \ instead of the full gradient $\D $, since one can express the second distributional derivatives of a vector field by a linear combination of its first derivatives of the symmetrized gradient, see \eqref{eq:sec_der_id}. Moreover, we deduce the estimate
\begin{equation}
  \norm{ f}_{ W^{m,\,p}(\Omega,\R^n)} \le c\,\left(\norm{ f}_{ W^{m-1,\,p}(\Omega,\R^n)} + \norm{ \sym \D  f }_{ W^{m-1,\,p}(\Omega,\R^{\vphantom{d}n\times n})}\right)
\end{equation}
 which is Korn's second inequality in $L^p$ or $W^{1,\,p}$ with $m=0$ resp. $m=1$, cf. \cite{ACGK2006,CMM2018} for the case $p=2$.
\end{remark}

For the subsequent considerations, we shall focus on the three-dimensional case $n=3$.
We will work in the Banach-space
\begin{subequations}
\begin{align}
  W^{1,\,p}(\Curl; \Omega,\R^{3\times3}) &\coloneqq \{P\in L^p(\Omega,\R^{3\times3})\mid \Curl P \in L^p(\Omega,\R^{3\times3})\}
 \shortintertext{equipped with the norm}
 \norm{P}_{ W^{1,\,p}(\Curl; \Omega,\R^{3\times3})}&\coloneqq \left(\norm{P}^p_{L^p(\Omega,\R^{3\times3})} + \norm{\Curl P}^p_{L^p(\Omega,\R^{3\times3})} \right)^{\frac{1}{p}}.
\end{align}

\end{subequations}
The density of $\mathscr{D}(\Omega,\R^{3\times3})$ in $ W^{1,\,p}(\Curl; \Omega,\R^{3\times3})$ follows by standard arguments. Furthermore, we consider the subspace
\begin{align*}
  W^{1,\,p}_0(\Curl; \Omega,\R^{3\times3}) \coloneqq \{P\in  W^{1,\,p}(\Curl; \Omega,\R^{3\times3}) \mid P \times \nu = 0 \text{ on } \partial \Omega\},
\end{align*}
where $\nu$ denotes the outward unit normal vector field to $\partial\Omega$,
and the tangential trace $P\times \nu$ is understood in the sense of $W^{-\frac1p,\, p}(\partial \Omega,\R^{3\times3})$ which is justified by integration by parts, so that its trace is defined by
\begin{equation}
 \forall\ Q\in  W^{1-\frac{1}{p'},\,p'}(\partial\Omega,\R^{3\times 3}) :  \quad \skalarProd{P\times (-\nu)}{Q}_{\partial \Omega}=  \int_{\Omega}\skalarProd{\Curl P}{\widetilde{Q}}-\skalarProd{P}{\Curl \widetilde{Q}}\, \intd{x},
\end{equation}
where $\widetilde{Q}\in W^{1,\,p'}(\Omega,\R^{3\times3})$ denotes any extension of $Q$ in $\Omega$. Here, $\skalarProd{.}{.}_{\partial\Omega}$ indicates the duality pairing between $W^{-\frac1p,\,p}(\partial\Omega,\R^{3\times3})$ and $W^{1-\frac{1}{p'},\,p'}(\partial\Omega,\R^{3\times 3})$.
\section{Main results}

We shall start with the following

\begin{lemma}\label{lem:basic}
  Let $\Omega \subset \R^3$ be a bounded Lipschitz domain and $1<p<\infty$. Then $P\in\mathscr{D}'(\Omega,\R^{3\times3})$, $\sym P\in L^p(\Omega,\R^{3\times3})$ and $\Curl P \in W^{-1,\,p}(\Omega,\R^{3\times3})$ imply $P\in L^p(\Omega,\R^{3\times3})$. Moreover, we have the estimate
  \begin{equation}\label{eq:basic}
   \norm{P}_{L^p(\Omega,\R^{3\times3})} \leq c\, \left(\norm{\skew P}_{W^{-1,\,p}(\Omega,\R^{3\times3})}+\norm{\sym P}_{L^p(\Omega,\R^{3\times3})}+ \norm{ \Curl P }_{W^{-1,\,p}(\Omega,\R^{3\times3})}\right),
  \end{equation}
  with a constant $c=c(p,\Omega)>0$.
\end{lemma}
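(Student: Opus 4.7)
The plan is to mimic the Ciarlet--Geymonat strategy for Korn's inequality, with the algebraic identity \eqref{eq:sec_der_id} (which expresses second derivatives of $u$ in terms of first derivatives of $\sym\bnabla u$) replaced by Nye's formula from Corollary \ref{cor:lin_combi} (which expresses first derivatives of a skew-symmetric field $A$ in terms of $\Curl A$). The Ne\v{c}as estimate will play exactly the same role as in the classical proof of Korn.

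First I would split $P = \sym P + \skew P$ and set $A \coloneqq \skew P \in \mathscr{D}'(\Omega,\so(3))$; the task reduces to controlling $A$. Since $\sym P \in L^p$, the elementary embedding \eqref{eq:Curlemddual} gives $\Curl \sym P \in W^{-1,\,p}(\Omega,\R^{3\times 3})$ with
\begin{equation*}
 \norm{\Curl \sym P}_{W^{-1,\,p}(\Omega,\R^{3\times 3})} \le c\, \norm{\sym P}_{L^p(\Omega,\R^{3\times 3})}.
\end{equation*}
Subtracting from the hypothesis $\Curl P \in W^{-1,\,p}$ yields $\Curl A = \Curl P - \Curl \sym P \in W^{-1,\,p}(\Omega,\R^{3\times 3})$ together with
\begin{equation*}
 \norm{\Curl A}_{W^{-1,\,p}(\Omega,\R^{3\times 3})} \le \norm{\Curl P}_{W^{-1,\,p}(\Omega,\R^{3\times 3})} + c\, \norm{\sym P}_{L^p(\Omega,\R^{3\times 3})}.
\end{equation*}

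Next I would invoke Nye's formula in its distributional form
\begin{equation*}
 \bnabla \axl A = \tfrac12 (\tr \Curl A)\,\id - (\Curl A)^T,
\end{equation*}
which is the content of Corollary \ref{cor:lin_combi} together with the subsequent Remark. This immediately gives $\bnabla \axl A \in W^{-1,\,p}(\Omega,\R^{3\times 3})$ with $\norm{\bnabla \axl A}_{W^{-1,\,p}} \le c\,\norm{\Curl A}_{W^{-1,\,p}}$. Since $\axl A \in \mathscr{D}'(\Omega,\R^3)$ a priori, the Lions lemma / Ne\v{c}as estimate (Theorem \ref{th:necas_estimate}) with $m=0$ then yields $\axl A \in L^p(\Omega,\R^3)$ and
\begin{equation*}
 \norm{\axl A}_{L^p(\Omega,\R^3)} \le c\,\bigl(\norm{\axl A}_{W^{-1,\,p}(\Omega,\R^3)} + \norm{\bnabla \axl A}_{W^{-1,\,p}(\Omega,\R^{3\times 3})}\bigr).
\end{equation*}
Because $\anti:\R^3 \to \so(3)$ is a linear bijection, the $L^p$- and $W^{-1,\,p}$-norms of $A$ and of $\axl A$ are equivalent; translating back to $A$ and chaining with the previous bound on $\Curl A$, I arrive at
\begin{equation*}
 \norm{\skew P}_{L^p(\Omega,\R^{3\times 3})} \le c\,\bigl(\norm{\skew P}_{W^{-1,\,p}(\Omega,\R^{3\times 3})} + \norm{\Curl P}_{W^{-1,\,p}(\Omega,\R^{3\times 3})} + \norm{\sym P}_{L^p(\Omega,\R^{3\times 3})}\bigr).
\end{equation*}
Finally, $\norm{P}_{L^p} \le \norm{\sym P}_{L^p} + \norm{\skew P}_{L^p}$ delivers \eqref{eq:basic} and in particular $P \in L^p(\Omega,\R^{3\times 3})$.

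I do not expect a serious obstacle here, as all the structural work has been done upstream in the paper: Nye's formula reduces the problem for the skew-symmetric part to a single scalar Ne\v{c}as estimate, and the hypothesis $\sym P \in L^p$ enters only through \eqref{eq:Curlemddual}. The only point that needs verification is that every algebraic identity linking $\Curl A$ and $\bnabla \axl A$ is interpreted in $\mathscr{D}'(\Omega)$, so that Theorem \ref{th:necas_estimate} can actually be applied to the a priori only distributional object $\axl A$; this is exactly what the Remark following Corollary \ref{cor:lin_combi} provides.
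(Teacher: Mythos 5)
Your proposal is correct and follows essentially the same route as the paper's own proof: both control $\skew P$ by observing $\Curl\skew P=\Curl P-\Curl\sym P\in W^{-1,\,p}$ via \eqref{eq:Curlemddual}, invoke Nye's formula (Corollary \ref{cor:lin_combi}) to bound the full gradient of the skew part, and then apply the Ne\v{c}as estimate (Theorem \ref{th:necas_estimate}) with $m=0$. The only cosmetic difference is that you phrase the Ne\v{c}as step for the vector field $\axl(\skew P)$ while the paper applies it to $\skew P$ directly, which is equivalent since $\anti$ is a linear bijection.
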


\begin{remark}
In the proof of our generalized Korn-type inequalities (Theorem \ref{thm:main1} and Theorem \ref{thm:main2}) we make use of the compact embedding $L^p(\Omega)\subset\!\subset W^{-1,\,p}(\Omega)$, see \eqref{eq:dualcompact}, so that the $W^{-1,\,p}$-norm of the first term on the right hand side is of crucial importance and will not be estimated by the $L^p$-norm.
\end{remark}

\begin{proof}[Proof of Lemma \ref{lem:basic}]
It suffices to deduce that $\skew P\in L^p(\Omega,\R^{3\times 3})$ from the assumption of the lemma. By the linearity of the operator $\Curl$ and the decomposition $P=\skew P + \sym P$ holding in $\mathscr{D}'(\Omega,\R^{3\times3})$, the following subsists \[\Curl\skew P =\Curl P -\Curl\sym P \qquad \text{ in } \mathscr{D}'(\Omega,\R^{3\times3}).\] By virtue of the assumed regulartiy of the right hand side, we obtain that the left hand side thus belongs to  $W^{-1,\,p}(\Omega,\R^{3\times3})$ and moreover we have
\begin{align}\label{eq:curlskew}
\norm{\Curl \skew P}_{ W^{-1,\,p}(\Omega,\R^{3\times3})} &= \norm{\Curl(P-\sym P)}_{ W^{-1,\,p}(\Omega,\R^{3\times3})} \notag \\
&\le c \,(\norm{\Curl P}_{ W^{-1,\,p}(\Omega,\R^{3\times3})} + \norm{\sym P}_{ L^p(\Omega,\R^{3\times3})}),
\end{align}
hence $ \Curl \skew P \in  W^{-1,\,p}(\Omega,\R^{3\times3})$, so that  by Corollary \ref{cor:lin_combi} we deduce $\D \skew P\in  W^{-1,\,p}(\Omega,\R^{3\times3\times3})$.

Now, we can apply the above Lions lemma resp. Ne\v{c}as estimate (Theorem \ref{th:necas_estimate}) to $\skew P$. We arrive at $\skew P\in L^p(\Omega,\R^{3\times 3})$ and, moreover,
\begin{align}\label{eq:Norm_a}
 \norm{\skew P}_{L^p(\Omega,\R^{3\times 3})}&\le\quad  c\, (\norm{\skew P}_{ W^{-1,\,p}(\Omega,\R^{3\times3})} +  \norm{\D \skew P}_{ W^{-1,\,p}(\Omega,\R^{3\times3\times3})}) \notag \\
 &\overset{\mathclap{\text{Cor. \ref{cor:lin_combi}}}}{\leq}\quad c\, ( \norm{\skew P}_{ W^{-1,\,p}(\Omega,\R^{3\times 3})} +  \norm{\Curl \skew P}_{ W^{-1,\,p}(\Omega,\R^{3\times3})}) \notag \\
 &\overset{\mathclap{\eqref{eq:curlskew}}}{\leq}\quad c\, ( \norm{\skew P}_{ W^{-1,\,p}(\Omega,\R^{3\times3})} +  \norm{\Curl P}_{ W^{-1,\,p}(\Omega,\R^{3\times3})} + \norm{\sym P}_{L^p(\Omega,\R^{3\times3})}).
\end{align}
By adding $\norm{\sym P}_{L^p(\Omega,\R^{3\times3})}$ on both sides, the conclusion of the Lemma follows with regard to the orthogonal decomposition $P=\sym P + \skew P$.
\end{proof}

By eliminating the first term on the right-hand side of \eqref{eq:basic} we will arrive at our generalized Korn type inequalities, cf. Theorem \ref{thm:main1} and Theorem \ref{thm:main2}.

\begin{theorem}\label{thm:main1}
 Let $\Omega \subset \R^3$ be a bounded Lipschitz domain and $1<p<\infty$. There exists a constant $c=c(p,\Omega)>0$ such that for all $P\in  L^p(\Omega,\R^{3\times3})$,
 \begin{equation}\label{eq:Korn_Lp_w}
   \inf_{\widetilde{A}\in\so(3)}\norm{P-\widetilde{A}}_{L^p(\Omega,\R^{3\times3})}\leq c\,\left(\norm{ \sym P }_{L^p(\Omega,\R^{3\times3})}+ \norm{ \Curl P }_{W^{-1,\,p}(\Omega,\R^{3\times3})}\right).
 \end{equation}
\end{theorem}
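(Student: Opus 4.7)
The plan is to argue by contradiction and to absorb the $\skew$-term from Lemma~\ref{lem:basic} via the compact embedding $L^p(\Omega)\subset\!\subset W^{-1,\,p}(\Omega)$, exploiting that constant skew-symmetric matrices form precisely the joint kernel of $\sym$ and $\Curl$ (which is exactly the subspace being quotiented out on the left-hand side of \eqref{eq:Korn_Lp_w}).

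Suppose the estimate fails. Then there exists a sequence $(P_n)\subset W^{1,\,p}(\Curl;\Omega,\R^{3\times3})$ satisfying
\[
 \inf_{A\in\so(3)}\norm{P_n+A}_{L^p(\Omega,\R^{3\times3})}=1 \qquad\text{while}\qquad \norm{\sym P_n}_{L^p}+\norm{\Curl P_n}_{W^{-1,\,p}}\longrightarrow 0.
\]
Since $\so(3)\subset\R^{3\times 3}$ is finite-dimensional, the infimum is attained by some $A_n\in\so(3)$; replacing $P_n$ by $P_n+A_n$ changes neither $\sym P_n$ nor $\Curl P_n$ and normalises $\norm{P_n}_{L^p}=1$. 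By reflexivity of $L^p$, a subsequence satisfies $P_n\rightharpoonup P$ in $L^p$, and the compact embedding $L^p\subset\!\subset W^{-1,\,p}$ from \eqref{eq:dualcompact} upgrades this to strong convergence in $W^{-1,\,p}$; in particular $\skew(P_n-P)\to 0$ in $W^{-1,\,p}$.

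The decisive step is to apply Lemma~\ref{lem:basic} to $P_n-P$:
\[
 \norm{P_n-P}_{L^p}\le c\,\bigl(\norm{\skew(P_n-P)}_{W^{-1,\,p}}+\norm{\sym(P_n-P)}_{L^p}+\norm{\Curl(P_n-P)}_{W^{-1,\,p}}\bigr).
\]
The continuity of $\sym:L^p\to L^p$ and $\Curl:L^p\to W^{-1,\,p}$ propagates weak $L^p$-convergence to $\sym P_n\rightharpoonup\sym P$ and $\Curl P_n\rightharpoonup\Curl P$, so the hypotheses $\sym P_n\to 0$ and $\Curl P_n\to 0$ force $\sym P=0$ and $\Curl P=0$. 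Hence all three terms on the right tend to $0$, and $P_n\to P$ strongly in $L^p$. The identity $\sym P=0$ says $P(x)\in\so(3)$ a.e., and the distributional version of Corollary~\ref{cor:lin_combi} (valid by the remark following it) yields $\bnabla\axl P=0$; the connectedness of the Lipschitz domain $\Omega$ then forces $\axl P$ to be constant, so that $P=A_\infty$ for some $A_\infty\in\so(3)$. This contradicts the normalisation, since
\[
 1=\inf_{A\in\so(3)}\norm{P_n+A}_{L^p}\le\norm{P_n-A_\infty}_{L^p}\longrightarrow 0.
\]

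I expect the main obstacle to be this strong-convergence step: Lemma~\ref{lem:basic} controls $\norm{P}_{L^p}$ only modulo an $\norm{\skew P}_{W^{-1,\,p}}$-contribution, and the scheme works precisely because that residual term sits in the dual norm, as emphasised in the remark after Lemma~\ref{lem:basic}. The finite dimension of $\so(3)$ keeps the normalisation step harmless, and the identification of the limit $P$ with a constant element of $\so(3)$ is then a routine application of the distributional Nye formula.
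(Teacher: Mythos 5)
Your proof is correct and rests on exactly the same three pillars as the paper's: Lemma~\ref{lem:basic}, the compactness $L^p(\Omega)\subset\!\subset W^{-1,\,p}(\Omega)$, and the identification of the joint kernel of $\sym$ and $\Curl$ with constant elements of $\so(3)$ via Corollary~\ref{cor:lin_combi}/Corollary~\ref{cor:Nye}. The only genuine difference is how the finite-dimensional kernel is handled: the paper first introduces the three linear forms $\ell_\alpha(P)=\int_\Omega P_{ij}\,\intd{x}$ that separate points of $K$, proves the augmented inequality \eqref{eq:hilfsungl} by a contradiction argument in which the weak limit is forced to be $0$ (so that $\skew P_k\to 0$ strongly in $W^{-1,\,p}$ and Lemma~\ref{lem:basic} is applied to $P_k$ itself), and only afterwards removes the $\ell_\alpha$ by choosing $A_P$; you instead normalise the quotient norm directly, apply Lemma~\ref{lem:basic} to $P_n-P$ to upgrade weak to strong $L^p$-convergence, identify the limit as a constant skew matrix, and contradict the quotient normalisation. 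Both are standard realisations of the same compactness scheme; yours is marginally shorter since it dispenses with the auxiliary functionals, while the paper's intermediate estimate \eqref{eq:hilfsungl} is a quantitative statement of independent interest. All the individual steps you flag (attainment of the infimum on the finite-dimensional $\so(3)$, invariance of $\sym$ and $\Curl$ under adding a constant skew matrix, weak continuity of $\sym$ and $\Curl$, and the distributional Nye formula on a connected domain) are sound.
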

\begin{proof}
We first start with the characterization of the kernel of the right-hand side,
$$
 K\coloneqq\{ P\in L^p(\Omega,\R^{3\times3}) \mid \sym P = 0  \text{ a.e. and } \Curl P = 0 \text{ in the distributional sense}\},
$$
by observing that $P\in K$ if and only if $P=\skew P$ and $P=\operatorname{const.}$ a.e. by virtue of Corollary \ref{cor:Nye}. Hence,
\begin{equation}\label{eq:charc_K}
 K=\{P\in L^p(\Omega,\R^{3\times3}) \mid  \exists\, \widetilde{A}\in\so(3) \text{ for which } P=\widetilde{A} \text{ a.e. in $\Omega$}\} \quad \text{and} \quad M \coloneqq\dim \so(3) =3.
\end{equation}
Consider the $M=3$ linear forms $\ell_{i+j-2}(P)\coloneqq\int_{\Omega}\skalarProd{e_i}{P\, e_j}\,\intd{x}=\int_\Omega P_{ij}\,\intd{x}$ on $L^p(\Omega,\R^{3\times3})$, $1\leq i < j\leq 3$. Then $P\in K$ is equal to $0$ a.e. if and only if $\ell_\alpha(P)=0$ for all $\alpha=1,2,3$. We claim that there exists a constant $c=c(p,\Omega)>0$, such that for all $P\in L^p(\Omega,\R^{3\times3})$ we have
\begin{equation}\label{eq:hilfsungl}
 \norm{P}_{L^p(\Omega,\R^{3\times3})}\leq c\,\left(\norm{ \sym P }_{L^p(\Omega,\R^{3\times3})}+ \norm{ \Curl P }_{W^{-1,\,p}(\Omega,\R^{3\times3})}+\sum_{\alpha=1}^3\abs{\ell_\alpha(P)} \right).
\end{equation}
Assume that \eqref{eq:hilfsungl} does not hold.\\
Then there would exist for each integer $k\in\N$ a function $P_k\in L^p(\Omega,\R^{3\times3})$ satisfying
\begin{equation}
 \norm{P_k}_{L^p(\Omega,\R^{3\times3})}=1 \quad \text{and}\quad \left(\norm{\sym P_k}_{L^p(\Omega,\R^{3\times3})}+\norm{\Curl P_k}_{ W^{-1,p}(\Omega,\R^{3\times3})}+ \sum_{\alpha=1}^3\abs{\ell_\alpha(P_k)}\right)< \frac1k.
\end{equation}
Thus, there exists a subsequence, which converges weakly to some $P^*$ in $L^p(\Omega,\R^{3\times3})$. It follows that $\sym P^*=0$ a.e., $\Curl P^*=0$ in the distributional sense and also $\ell_\alpha(P^*)=0$ for all $\alpha=1,2,3$. Thus, $P^*=0$ almost everywhere in $\Omega$.

Furthermore due to the compact embedding $L^p(\Omega,\R^{3\times3})\subset\!\subset  W^{-1,\,p}(\Omega,\R^{3\times3})$  there exists a subsequence, again denoted by $P_k$, so that $\skew P_k$ strongly converges to $0$ in $ W^{-1,\,p}(\Omega,\R^{3\times3})$. However, this conclusion is at variance with \eqref{eq:basic}:
\begin{equation}
 \begin{split}
 1 &= \norm{P_k}_{L^p(\Omega,\R^{3\times3})} \\
 & \overset{\eqref{eq:basic}}{\leq} c\, ( \norm{\skew P_k}_{ W^{-1,\,p}(\Omega,\R^{3\times3})} + \norm{\sym P_k}_{L^p(\Omega,\R^{3\times3})}+ \norm{\Curl P_k}_{ W^{-1,\,p}(\Omega,\R^{3\times3})} )\\
 &\qquad \to 0 \quad \text{for $k\to\infty$}.
 \end{split}
\end{equation}
This contradiction establishes estimate \eqref{eq:hilfsungl} for all $P\in L^p( \Omega,\R^{3\times3})$ and we arrive at the desired estimate:\\
For $P\in L^p(\Omega,\R^{3\times3})$, let $\widetilde{A}_P\in \so(3)$ be chosen in such a way that $\abs{\Omega}\left(\widetilde{A}_P\right)_{ij}=\int_\Omega P_{ij}\,\intd{x}$ for $1\leq i < j\leq 3$; in other words, $\ell_\alpha(P-\widetilde{A}_P)=0$ for all $\alpha=1,2,3$. Then
\begin{equation*}
 \inf_{\widetilde{A}\in \so(3)}\norm{P-\widetilde{A}}_{L^p(\Omega,\R^{3\times3})}\le \norm{P-\widetilde{A}_P}_{L^p(\Omega,\R^{3\times3})}  \overset{\eqref{eq:hilfsungl}}\leq c\,\left(\norm{ \sym P }_{L^p(\Omega,\R^{3\times3})}+ \norm{ \Curl P }_{W^{-1,\,p}(\Omega,\R^{3\times3})}\right).\qedhere
\end{equation*}
\end{proof}

\begin{remark}
 For compatible displacement gradients $P=\D  u$ we get back from \eqref{eq:Korn_Lp_w} the quantitative version of the classical Korn's inequality \eqref{eq:KornQuant} and for skew-symmetric matrix fields $P=A$ the corresponding Poincar\'{e} inequality since $\D  A = L(\Curl A)$.
\end{remark}

\begin{remark}
 To deduce the kernel of the right hand side of \eqref{eq:Korn_Lp_w}, we used Corollary \ref{cor:lin_combi} resp. \ref{cor:Nye}. Interestingly, on simply connected domains one can argue also in the following way:
 \begin{align*}
\Curl P \equiv 0 \ \Rightarrow \ P = \D  \vartheta,
 \end{align*}
 so, one  can apply the classical Korn's inequality \eqref{eq:KornQuant} to infer from $\sym \D  \vartheta \equiv 0$ that $\D \vartheta \equiv \operatorname{const} \in\so(3)$. 
 Finally, we examine the effect of homogeneous boundary conditions.
\end{remark}

\begin{theorem}\label{thm:main2}
Let $\Omega \subset \R^3$ be a bounded Lipschitz domain and $1<p<\infty$. There exists a constant $c=c(p,\Omega)>0$, such that for all $P\in  W^{1,\,p}_0(\Curl; \Omega,\R^{3\times3})$ we have
 \begin{equation}\label{eq:Korn_Lp_thm}
     \norm{ P }_{L^p(\Omega,\R^{3\times3})}\leq c\,\left(\norm{ \sym P }_{L^p(\Omega,\R^{3\times3})}+ \norm{ \Curl P }_{L^p(\Omega,\R^{3\times3})}\right).
 \end{equation}
\end{theorem}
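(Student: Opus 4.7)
I would prove this by combining Theorem~\ref{thm:main1} with the tangential boundary condition to eliminate the infimum over $\so(3)$. Since $L^p$ embeds continuously into $W^{-1,\,p}$ by~\eqref{eq:trivembdual}, Theorem~\ref{thm:main1} upgrades immediately to
\[
\inf_{A\in\so(3)} \norm{P+A}_{L^p(\Omega,\R^{3\times3})} \leq c\left(\norm{\sym P}_{L^p(\Omega,\R^{3\times3})} + \norm{\Curl P}_{L^p(\Omega,\R^{3\times3})}\right).
\]
Because $\so(3)$ is three-dimensional and $A\mapsto\norm{P+A}_{L^p}$ is continuous, convex, and coercive, this infimum is attained by some $A_P\in\so(3)$. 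The program is then to estimate $\norm{A_P}_{L^p(\Omega)}=\abs{\Omega}^{1/p}\norm{A_P}$ separately by the right-hand side, and to conclude via the triangle inequality $\norm{P}_{L^p}\leq\norm{P+A_P}_{L^p}+\norm{A_P}_{L^p}$.

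To control $A_P$, I would exploit that $\Curl A_P=0$, so $P+A_P\in W^{1,\,p}(\Curl;\Omega,\R^{3\times3})$ with norm bounded by the right-hand side. Continuity of the tangential trace $W^{1,\,p}(\Curl)\to W^{-\frac{1}{p},\,p}(\partial\Omega)$, which follows from the duality pairing set up in Section~\ref{sec:Notation}, combined with the assumption $P\times\nu=0$, yields
\[
\norm{A_P\times\nu}_{W^{-\frac{1}{p},\,p}(\partial\Omega)} = \norm{(P+A_P)\times\nu}_{W^{-\frac{1}{p},\,p}(\partial\Omega)} \leq c\left(\norm{\sym P}_{L^p} + \norm{\Curl P}_{L^p}\right).
\]

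The main obstacle, and the decisive new ingredient, is then to show that the linear map $\Phi:\so(3)\to W^{-\frac{1}{p},\,p}(\partial\Omega,\R^{3\times3})$, $A\mapsto A\times\nu$, is injective. Writing $A=\anti a$, identity~\eqref{eq:prod_id} gives $A\times\nu=\nu\otimes a-\skalarProd{\nu}{a}\,\id$; if this vanishes as an element of $W^{-\frac{1}{p},\,p}(\partial\Omega)$, Observation~\ref{obs:alg_statement} (applied pointwise a.e.\ on $\partial\Omega$) forces $\nu(x)\otimes a=0$, and since $\abs{\nu}=1$ a.e.\ on $\partial\Omega$ this yields $a=0$. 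Injectivity of $\Phi$ on the three-dimensional space $\so(3)$ then gives equivalence of norms on $\Phi(\so(3))$, hence $\norm{A_P}\leq c\,\norm{A_P\times\nu}_{W^{-\frac{1}{p},\,p}(\partial\Omega)}$. Combining the two estimates closes the argument. The geometric essence is that no nonzero constant skew matrix field can have vanishing tangential trace on a Lipschitz boundary, precisely because of~\eqref{eq:prod_id}; this is exactly what allows the quotient estimate of Theorem~\ref{thm:main1} to collapse to the genuine inequality~\eqref{eq:Korn_Lp_thm}.
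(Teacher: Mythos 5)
Your argument is correct, and it takes a genuinely different route from the paper. The paper proves Theorem \ref{thm:main2} by re-running the compactness/contradiction machinery from scratch: a normalized sequence $P_k$ with vanishing right-hand side converges weakly to some $P^*\in K$, the vanishing tangential traces of the $P_k$ pass to the limit through the duality pairing to give $P^*\times\nu=0$, whence $P^*=\anti a^*$ with $a^*\otimes\nu=0$ and so $P^*=0$; the contradiction then comes, as in Theorem \ref{thm:main1}, from the compact embedding $L^p\subset\!\subset W^{-1,\,p}$ and Lemma \ref{lem:basic}. You instead deduce Theorem \ref{thm:main2} \emph{directly} from the quotient estimate of Theorem \ref{thm:main1}: you bound the minimizing $A_P$ by its tangential trace, using (i) continuity of $P\mapsto P\times\nu$ from $W^{1,\,p}(\Curl;\Omega,\R^{3\times3})$ to $W^{-\frac1p,\,p}(\partial\Omega,\R^{3\times3})$, which indeed follows from the integration-by-parts definition together with a bounded extension $Q\mapsto\widetilde Q$, (ii) $\Curl A_P=0$ and $P\times\nu=0$, so that $A_P\times\nu=(P+A_P)\times\nu$ is controlled by the right-hand side, and (iii) injectivity of $A\mapsto A\times\nu$ on the finite-dimensional space $\so(3)$, which upgrades to a norm equivalence. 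Both proofs hinge on the same geometric core, namely Observation \ref{obs:alg_statement} applied with $b=\nu$, but your version avoids a second contradiction argument, is quantitative (it exhibits an explicit bound on the eliminated skew constant in terms of the data), and cleanly isolates where the boundary condition enters. The price is that you must invoke the boundedness of the tangential trace operator and of a right inverse of the trace on $W^{1,\,p'}(\Omega)$, which the paper only uses qualitatively; these facts are standard for Lipschitz domains and are consistent with the setup in Section \ref{sec:Notation}, so there is no gap.
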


\begin{proof}
We argue again by contradiction and assume that the estimate \eqref{eq:Korn_Lp_thm} does not hold. Then there exist functions $P_k\in W^{1,\,p}_0(\Curl; \Omega,\R^{3\times3})$ with the properties
$$
 \norm{P_k}_{L^p(\Omega,\R^{3\times3})}=1 \quad \text{and}\quad (\norm{\sym P_k}_{L^p(\Omega,\R^{3\times3})}+\norm{\Curl P_k}_{ L^p(\Omega,\R^{3\times3})})< \frac1k.
$$
Again $P_k\rightharpoonup P^*$ in $L^p(\Omega,\R^{3\times3})$ with $P^*\in K$. We now use the vanishing tangential trace condition to deduce that $P^*=0$ a.e.. In fact, since $\Curl P^* = 0$ a.e.,  we have for all $Q\in  W^{1-\frac{1}{p'},\,p'}(\partial\Omega,\R^{3\times 3})$
\begin{align}
 \dynabs{\skalarProd{P^*\times \nu}{Q}_{\partial\Omega}}= \lim_{k\to \infty} \dynabs{\int_\Omega\skalarProd{P_k}{\Curl \widetilde{Q}}\intd{x}}
 \le \lim_{k\to \infty} \norm{\Curl P_k}_{ L^p(\Omega,\R^{3\times3})}\,\norm{\widetilde{Q}}_{W^{1,\,p'}(\Omega,\R^{3\times3})}=0
\end{align}
where we have used $P_k\in W^{1,\,p}_0(\Curl;\Omega,\R^{3\times3})$ and $\widetilde{Q}\in W^{1,\,p'}(\Omega,\R^{3\times3})$ denotes any extension of $Q$ in $\Omega$. In other words, $P^*$ has vanishing tangential trace $P^*\times \nu = 0$. Moreover, since $P^*\in K$ by \eqref{eq:charc_K} there exists an $a^*\in \R^3$ such that
$$
P^*=\Anti (a^*)\in\so(3) \text{ a.e.}.
$$
Hence, the boundary condition $P^*\times \nu = \Anti(a^*)\times \nu= 0$ is also defined in the classical sense. By Observation~\ref{obs:alg_statement} we deduce $a^*=0$. We can now conclude as in the proof of Theorem \ref{thm:main1}
\end{proof}

\begin{remark}
 The same argumentation scheme applies to show that \eqref{eq:Korn_Lp_thm} also holds true for functions with vanishing tangential trace only on a relatively open (non-empty) subset $\Gamma\subseteq\partial\Omega$ of the boundary.
\end{remark}
\begin{remark}
It is well known, that Korn's inequality does not imply Poincar\'{e}'s inequality, however, due to the presence of the $\Curl$-part we get back both estimates from our generalization. Indeed,
for compatible $P=\D  u$ we recover from \eqref{eq:Korn_Lp_thm} a tangential Korn inequality and for skew-symmetric $P=A$ a Poincar\'{e} inequality.
\end{remark}

\begin{remark}
 The proof of Korn's inequality for the gradient of the displacement or for general incompatible tensor fields is mainly based on suitable representation formulas, cf. \eqref{eq:sec_der_id} and Corollary \ref{cor:lin_combi}, respectively. Cross-combining both conditions we obtain only infinitesimal rigid body motions:
 \begin{equation}
 \left.
 \begin{array}{llcl}
  \D  P = L(\D  \sym P) &\text{for }  P= \D  u &\ \& & P\in \so(3) \\[2ex]
  \D  A = L(\Curl A), &\text{for }  A\in\so(3) &\ \& & A= \D  u
 \end{array}
 \right\}\ \Longrightarrow\  u = \widetilde{A}\,x +\widetilde{b}, \ \text{with constant } \widetilde{A}\in\so(3), \widetilde{b}\in \R^3.
 \end{equation}
 See \cite{Smith70} for comparable representation formulas and deduced coercive inequalities.
 \end{remark}

\begin{remark}
It is  clear how to extend the present results to $(n\times n)$-square tensor fields with $n> 3$. The corresponding generalized $\Curl$ and tangential trace operation have already been presented in \cite{agn_munch2008curl,agn_neff2012maxwell}. This will be subject of a forthcoming note.
\end{remark}

\subsection{Open problems}
An interesting question is whether our result holds on domains more general than Lipschitz. For example the domain cannot have external cusps; indeed, both the classical Korn's inequality and the Lions lemma fail on such domains, cf. \cite{Weck1994,GG1998counterLions}. On the other hand, John domains support Korn-type inequalities, cf. \cite{DM2004Jonesdomains,DRS2010Johndomains,ADM2006Johndomains,Friedrich2018John}. John domains generalize the concept of Lipschitz domains, allowing certain fractal boundary structures but excluding the formation of external cusps.
However, there exist domains which are not John but
allow for Korn and Poincar\'{e} inequalities, see the discussions in \cite{DM2004Jonesdomains,BK1995John,JK2017John}.

A similar result concerning the  validity of the Lions lemma resp. Ne\v{c}as estimate would be interesting.

\subsubsection*{Acknowledgment}
This work does not have any conflicts of interest. The authors are grateful for inspiring discussions with Stefan M\"uller (Hausdorff Center for Mathematics, Bonn, Germany). This work was initiated in the framework of the Priority Programme SPP 2256 'Variational Methods for Predicting Complex Phenomena in Engineering Structures and Materials' funded by the Deutsche Forschungsgemeinschaft (DFG, German research foundation), Project-ID 422730790. The second author was supported within the project 'A variational scale-dependent transition scheme - from Cauchy elasticity to the relaxed micromorphic continuum’ (Project-ID 440935806). Moreover, both authors were supported in the Project-ID 415894848 by the Deutsche Forschungsgemeinschaft.

 \printbibliography

\end{document}